\newtheorem{thm}{Theorem}
\newtheorem{lem}[thm]{Lemma}
\newtheorem{cor}[thm]{Corollary}
\theoremstyle{definition}
\newtheorem{say}[thm]{}
\newtheorem{rem}[thm]{Remark}          
\newtheorem*{ack}{Acknowledgments}      
\newtheorem{defn-thm}[thm]{Definition--Theorem}  
\newtheorem{defn-lem}[thm]{Definition--Lemma}  
\theoremstyle{remark}
\renewcommand{\o}[0]{{\mathcal O}} 
\newcommand{\q}[0]{{\mathbb Q}}
\newcommand{\proj}[0]{\operatorname{Proj}}
\newcommand{\aut}[0]{\operatorname{Aut}}
\newcommand{\ex}[0]{\operatorname{Ex}}
\newcommand{\res}[0]{\operatorname{\mathcal R}} 
\newcommand{\alt}[0]{\operatorname{\mathcal A}}
\newcommand{\tsum}[0]{\textstyle{\sum}}
\newcommand{\shom}[0]{\operatorname{\mathcal{H}\!\it{om}}}
\def\loccoh#1.#2.#3.#4.{H^{#1}_{#2}(#3,#4)}
\DeclareMathAlphabet{\mathchanc}{OT1}{pzc}%
                                {m}{it}
\begin{document}
\bibliographystyle{amsalpha}


 \title[Resolution and alteration]{Resolution and alteration with \\ ample exceptional divisor}
 \begin{abstract}In this short note we explain how to construct resolutions or regular alterations admitting an ample exceptional divisor, assuming the existence of projective resolutions or regular alterations. In particular, this implies the existence of such resolutions for arithmetic three-dimensional singularities.
\end{abstract}
\author{J\'anos Koll\'ar and Jakub Witaszek}

 \maketitle

It is frequently advantageous to have resolutions or alterations that have an 
ample exceptional divisor. While Hironaka-type methods automatically produce such a resolution, neither the resolution of 3-dimensional schemes
\cite{cos-pil-2014}  nor  alterations \cite{deJ-alt} yield ample exceptional divisors right away.
The aim of this note is to outline a simple trick that does ensure
the existence of ample exceptional divisors. 

Let $X$ be  an integral scheme.
A proper, birational morphism
 $\pi \colon Y \to X$ is a \emph{resolution}  if $Y$ is regular, and a 
 \emph{log resolution} if, in addition,   the exceptional locus  $\ex(\pi)$ is a simple normal crossing divisor.  
A proper, dominant, generically finite  morphism $\pi \colon Y \to X$ is an
\emph{alteration.} It is called
\emph{regular} if  $Y$ is regular, and \emph{Galois} with group $G=\aut(Y/X)$ if $Y/G\to X$ is generically purely inseparable. 
We let $\ex(\pi)\subset Y$ denote the smallest
closed subset such that $\pi$ is quasi-finite on $Y\setminus\ex(\pi)$.

\begin{thm} \label{jw.thm.1} Let $X$ be a  Noetherian, normal scheme. Assume that projective
resolutions (resp.\ log resolutions) exist for every  
scheme  $X'\to X$ that is projective and birational over $X$.

Then  $X$  has a  projective resolution (resp.\ log resolution)
$g:\res(X)\to X$ by a scheme $\res(X)$, such that $\ex(g)$ supports a $g$-ample divisor.
\end{thm}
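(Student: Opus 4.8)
The plan is to reduce to the case of a locally factorial base, where the statement is immediate, and to effect that reduction by one auxiliary blow‑up.

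\emph{The factorial case.} First I would apply the hypothesis to $X$ itself to get a projective resolution $\pi\colon Y\to X$. Since $X$ is normal and $Y$ regular, the locus $W\subseteq X$ over which $\pi$ is not an isomorphism has codimension $\ge 2$, $X\setminus W$ is regular, and $\ex(\pi)=\pi^{-1}(W)$. Fix a $\pi$-ample line bundle $\sL$ on $Y$, and let $\sM$ be the reflexive extension to $X$ of the line bundle on $X\setminus W$ corresponding to $\sL|_{Y\setminus\ex(\pi)}$. If $X$ is locally factorial, then $\sM$ is invertible; now $\sL\otimes\pi^*\sM^{-1}$ is still $\pi$-ample (twisting by the pullback of a line bundle from the base does not affect relative ampleness) and is trivial on $Y\setminus\ex(\pi)$, so extending a trivializing section to a rational section of it shows $\sL\otimes\pi^*\sM^{-1}\cong\o_Y(D)$ for a divisor $D$ with $\supp D\subseteq\ex(\pi)$. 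Thus $\ex(\pi)$ supports the $\pi$-ample divisor $D$. Taking $\pi$ to be a log resolution, and, if necessary, performing one further log resolution to make the total exceptional divisor simple normal crossings, handles the log case.

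\emph{The general case.} In general $\sM$ is invertible over $X_{\mathrm{reg}}$, so its non-invertibility locus lies in $\sing(X)$ and has codimension $\ge 2$. By flattening (Raynaud--Gruson) there is a blow-up $\mu\colon X'\to X$ whose center is supported on that locus — hence of codimension $\ge 2$, with $\mu$ an isomorphism over $X_{\mathrm{reg}}$ — such that the strict transform $\sM'$ of $\sM$ becomes invertible on $X'$; replacing $X'$ by its normalization we keep $X'$ normal. As $\mu$ is a blow-up, $-E_\mu=\sI_{\mathrm{center}}\cdot\o_{X'}$ is $\mu$-ample, and since the center has codimension $\ge 2$ the effective divisor $E_\mu$ is supported precisely on $\ex(\mu)$. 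Let $V$ be the normalization of the dominant component of $Y\times_X X'$; it is projective and birational over $X$, dominates $Y$ and $X'$, and is isomorphic to $X\setminus W$ over $X\setminus W$. Resolve it by the hypothesis, $\rho\colon Z\to V$, and set $g\colon Z\to V\to X$. On $Z$, the pullback of $\sL$ from $Y$ together with the pullback of $(\sM')^{-1}\otimes\o_{X'}(-mE_\mu)$ (which is $\mu$-ample for $m\gg0$) from $X'$ combine — using that the box product of relatively ample sheaves on a fibre product is relatively ample — to a $(V/X)$-ample bundle whose restriction over $X\setminus W$ is trivial; tensoring a suitable power of it with a $\rho$-ample class, and using that $(\rho\text{-ample})\otimes\rho^*(\text{$(V/X)$-ample})^{\otimes m}$ is $g$-ample for $m\gg0$, yields a $g$-ample line bundle. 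Its restriction to the isomorphism locus $Z\setminus\ex(g)$ is trivial, so it equals $\o_Z$ of a divisor every component of which maps into a codimension-$\ge 2$ subset of $X$, hence is $g$-exceptional.

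\emph{Where the difficulty lies.} The geometric idea above is simple; the work is in the \emph{bookkeeping of exceptional divisors} — one must make sure that no divisor introduced by $\mu$, by passing to $V$, or by $\rho$ contributes to the final ample class a component that maps \emph{onto} a divisor of $X$. This is arranged by always choosing the blow-up ideals involved to be invertible at the codimension-one points lying outside the relevant exceptional locus — possible because a normal scheme is regular, hence locally factorial, in codimension one, so that codimension-one parts of these ideals can be split off there — thereby forcing all centers to have codimension $\ge 2$, after which a dimension count makes the corresponding exceptional divisors exceptional over $X$. I expect this bookkeeping, rather than any one geometric step, to be the main point requiring care; the remaining ingredients are the standard stability properties of relative ampleness under pullback, under twist by pullbacks from the base, and its openness.
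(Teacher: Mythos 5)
Your overall strategy is genuinely close to the paper's (resolve, push the relatively ample bundle down to a rank-one sheaf $\sM$ on $X$, blow up to make its ``inverse'' relatively ample, pass to a common resolution, and combine pullbacks so that the result is trivial off the exceptional locus), but the final step has a real gap. The $g$-ample bundle you construct is $A_\rho\otimes\rho^*B^{\otimes m}$, where $A_\rho$ is an arbitrary $\rho$-ample line bundle for the resolution $\rho\colon Z\to V$ supplied by the hypothesis. While $\rho^*B^{\otimes m}$ is indeed trivial on $Z\setminus\ex(g)$, the factor $A_\rho$ is not: its restriction to the isomorphism locus is an uncontrolled line bundle on an open subscheme of $V$, so the asserted triviality of the total class on $Z\setminus\ex(g)$ — which is exactly what you need to write it as $\o_Z(D)$ with $\supp D\subseteq\ex(g)$ — does not follow. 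The usual cure (replace $A_\rho$ by $A_\rho\otimes\rho^*(\rho_*A_\rho)^{[-1]}$) needs the base of $\rho$ to be $\q$-factorial, and $V$ is merely normal. Your ``bookkeeping'' paragraph does not touch this term either: $\rho$ is a black-box resolution from the hypothesis, not a blow-up whose ideal you get to choose. The paper's proof handles precisely this point by applying your ``factorial case'' not over $V$ but over the regular model: since $Y$ is regular, hence locally ($\q$-)factorial, the composite $Z\to Y$ admits a $(Z/Y)$-ample divisor $E$ supported in the $(Z/Y)$-exceptional locus (the paper's Lemma~\ref{jw.lem.0}); then $(\text{pullback of }\sL^{\otimes m})(E)$ is $g$-ample for $m\gg0$, and tensoring with the pullback of the relatively semiample class coming from $X'$ keeps ampleness while making the whole thing trivial off $\ex(g)$. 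So the missing idea is exactly this use of factoriality of the \emph{regular} intermediate scheme to convert the extra ample class into an exceptional one; with it, your argument essentially becomes the paper's.

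Two secondary points. First, you normalize $X'$ and the dominant component of $Y\times_X X'$, but over a general Noetherian (non-excellent) base normalization need not be finite, so the resulting schemes may fail to be projective over $X$ and then fall outside the hypothesis; the paper avoids this by working with the (non-normalized) dominant irreducible component $X_{12}\subset X_1\times_X X_2$, and its Lemma~\ref{jw.lem.2} never needs that component to be normal. Second, your flattening blow-up of $\sM$ plays the same role as the paper's $X_2=\proj_X\tsum_{m\ge0}L_2^{\otimes m}$ with $L_2=\shom_X(L_1,\o_X)$, and is fine; but note that what is ultimately used is only that the class pulled back from this model is relatively semiample (nef) over $X$ and restricts to $\sM^{-1}$ off $W$, so the extra twist by $\o_{X'}(-mE_\mu)$ is unnecessary once the correction over $Y$ is in place.
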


\begin{thm} \label{jw.thm.2}
Let $X$ be a  Noetherian, normal scheme. Assume that  regular, projective, 
Galois  alterations  exist for every  
scheme  $X'\to X$ that is projective and generically purely inseparable  over $X$.

Then  $X$  has a  regular, projective, Galois alteration 
$g:\alt(X)\to X$  by a scheme $\alt(X)$, such that $\ex(g)$ supports a $g$-ample divisor.
\end{thm}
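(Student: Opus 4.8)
The plan is to extract an ample exceptional divisor ``for free'' from a blow-up of the singular locus, and then to bootstrap from that to a resolution (resp.\ a regular alteration) using the hypothesis.

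First I would set $Z:=\sing X$, which has codimension $\ge 2$ since $X$ is normal, and let $\pi_0\colon X_0\to X$ be the normalization of the blow-up $\mathrm{Bl}_ZX$. Then $\pi_0$ is projective and birational (in particular generically purely inseparable); it is an isomorphism over $X\setminus Z$, so $X_0$ is regular there and $\sing X_0\subseteq\pi_0^{-1}(Z)$. Pulling $\mathcal O_{\mathrm{Bl}_ZX}(1)$ back to $X_0$ gives an effective Cartier divisor $A_0$ with $\supp A_0=\pi_0^{-1}(Z)$ and with $-A_0$ relatively ample over $X$ (a finite pull-back of a relatively ample sheaf is relatively ample). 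Moreover $\ex(\pi_0)=\supp A_0$: on the one hand $\pi_0$ is an isomorphism away from $Z$, so $\ex(\pi_0)\subseteq\pi_0^{-1}(Z)$; on the other hand every component of the Cartier divisor $\supp A_0$ dominates a subset of $Z$, hence is a prime divisor contracted to codimension $\ge 2$, hence is $\pi_0$-exceptional.

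Next, since $X_0\to X$ is projective and birational, the hypothesis of Theorem~\ref{jw.thm.1} (resp.\ Theorem~\ref{jw.thm.2}, since a birational morphism is generically purely inseparable) supplies a projective resolution, resp.\ a regular, projective, Galois alteration, $\rho\colon Y\to X_0$; I would take it to be an isomorphism (resp.\ finite) over $X_0\setminus\sing X_0$, which one can arrange for the standard constructions. Put $g:=\pi_0\circ\rho\colon Y\to X$. Then $Y$ is regular and $g$ is a projective resolution (resp.\ a regular, projective alteration); in the alteration case $g$ is again Galois, because $X_0$ is canonically attached to $X$ and $X_0\to X$ is birational, which forces $\aut(Y/X)=\aut(Y/X_0)$, while $Y/G\to X_0\to X$ remains generically purely inseparable. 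Since $\rho$ does nothing over the regular locus of $X_0$ and $\sing X_0\subseteq\supp A_0$, we get $\ex(\rho)\subseteq\rho^{-1}(\sing X_0)\subseteq\rho^{-1}(\supp A_0)$, so that $\ex(g)=\ex(\rho)\cup\rho^{-1}(\ex\pi_0)=\rho^{-1}(\supp A_0)=\supp(\rho^*A_0)$; in particular $\ex(g)$ is a divisor, and in the log-resolution case, with $\rho$ a log resolution, it is simple normal crossing.

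It then remains to produce a $g$-ample divisor supported on $\supp(\rho^*A_0)$. The divisor $-\rho^*A_0$ is $g$-nef (it is the pull-back of the $\pi_0$-ample $-A_0$) but $\rho$-trivial, so one must add a $\rho$-ample divisor supported on $\ex(\rho)\subseteq\supp(\rho^*A_0)$. For Theorem~\ref{jw.thm.1} I would get this by realizing $\rho$ as the blow-up of an ideal sheaf on $X_0$ whose cosupport lies in $\sing X_0$ — possible because a projective birational morphism that is an isomorphism over the regular locus admits such a presentation (e.g.\ any resolution built by blowing up centers over the singular locus is of this form), and $\sing X_0$ has codimension $\ge 2$ — so that its exceptional Cartier divisor $G$ satisfies $-G$ $\rho$-ample and $\supp G\subseteq\rho^{-1}(\sing X_0)\subseteq\supp(\rho^*A_0)$. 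Then for $n\gg 0$ the effective divisor $G+n\,\rho^*A_0$ has support in $\ex(g)$ and $-(G+n\,\rho^*A_0)$ is $g$-ample, by the standard fact that $f^*L^{\otimes n}\otimes M$ is $(h\circ f)$-ample for $n\gg 0$ whenever $M$ is $f$-ample and $L$ is $h$-ample. For Theorem~\ref{jw.thm.2} the alteration $\rho$ is not a blow-up, but one runs the same argument after arranging that $\rho$ factors as a finite morphism followed by a birational one, applying the blow-up step to the birational factor and pulling the resulting relatively anti-ample divisor back along the finite factor, which again preserves relative ampleness.

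The step I expect to be the main obstacle is this last one: producing a relatively ample divisor supported on $\ex(\rho)$, i.e.\ controlling the ``horizontal'' part of the presenting ideal (or of the chosen relatively ample class on $Y$) so that no non-exceptional prime divisor enters its support, together with the related point that the resolution or alteration of $X_0$ may be taken to be an isomorphism (resp.\ finite) over the regular locus of $X_0$. The rest is formal bookkeeping with relative ampleness, exceptional loci, and the Galois structure.
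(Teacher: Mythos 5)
There is a genuine gap, and it sits exactly where you flag ``the main obstacle'': producing a relatively ample divisor supported on the exceptional locus of the resolution/alteration $\rho$ of the auxiliary scheme $X_0$. That statement is not easier than the theorem itself (normalized blow-ups $X_0$ of $\sing X$ are just as general, and in particular not $\q$-factorial), and the justification you offer is not available. The claim that a projective birational morphism which is an isomorphism over the regular locus can be realized as the blow-up of an ideal cosupported in the singular locus is false in general: a small resolution of the threefold node $xy=zw$ is projective, birational, and an isomorphism over the regular locus, yet it carries no relatively ample exceptional divisor at all (its exceptional locus is a curve), hence admits no such presentation. Your fallback, ``any resolution built by blowing up centers over the singular locus is of this form,'' assumes a Hironaka-type resolution, which is precisely what the hypothesis does not provide: the theorem only assumes abstract existence of projective resolutions or regular, projective, Galois alterations (Cossart--Piltant, de Jong), and the whole point of the paper is that these do not come with ample exceptional divisors or with blow-up presentations of controlled cosupport. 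For Theorem~\ref{jw.thm.2} the situation is worse still: you additionally assume the Galois alteration can be arranged quasi-finite over the regular locus (not granted, and no argument is given), and the factorization you invoke, ``finite followed by birational,'' is not what Stein factorization yields (it gives $Y\to X_0'$ birational followed by $X_0'\to X_0$ finite), so you are again left needing a relatively ample exceptional divisor for a birational morphism onto a non-$\q$-factorial normal scheme $X_0'$ --- the same unproved statement one level up. So the argument is circular at its core.

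For comparison, the paper circumvents exactly this obstruction with two ideas that are absent from your proposal. First, the relatively ample bundle $H_1$ on the first alteration is descended, over the complement of the exceptional locus, to a generically invertible coherent sheaf $L_1$ on $X$ (Lemma~\ref{jw.lem.1}, using that the Stein factorization is finite purely inseparable, hence factors through a power of Frobenius), and one forms the auxiliary modification $X_2=\proj_X\tsum_{m\ge0}(\shom_X(L_1,\o_X))^{\otimes m}$; on a common alteration $X_3$ the tensor combination $\tau_2^*H_2^m\otimes\tau_1^*H_1^{qm}(E_3)$ is relatively ample and trivial off $\ex(\pi_3)$, so a rational section produces the desired ample exceptional divisor (Lemma~\ref{jw.lem.2}). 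Second, in the alteration case the quotient $X_1=\bar X_1/G$ of the regular $\bar X_1$ by the Galois group is $\q$-factorial, which is what makes Lemma~\ref{jw.lem.0} applicable to the Stein factorization of $X_3\to X_1$ and supplies the divisor $E_3$. Without a substitute for these two inputs (the $\proj$ construction on $X$ and the $\q$-factoriality of the Galois quotient), the key step of your plan has no proof.
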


Note that  Theorems \ref{jw.thm.1}--\ref{jw.thm.2} are also valid for algebraic spaces and  stacks; see Remark~\ref{rem:algebraic-spaces-and-stacks} for details.

\begin{cor} \label{cor.1}
Let $X$ be a normal, integral, quasi-excellent scheme (or algebraic space) of dimension at most three, that is separated and of finite type over an affine quasi-excellent scheme $S$. Then $X$ admits a  projective log resolution $g:\res(X)\to X$ by a scheme $\res(X)$, such that $\ex(g)$ supports a $g$-ample divisor.
\end{cor}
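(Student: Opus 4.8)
The plan is to deduce Corollary~\ref{cor.1} from the log-resolution case of Theorem~\ref{jw.thm.1}, using Remark~\ref{rem:algebraic-spaces-and-stacks} to pass from schemes to algebraic spaces. Since $S$ is quasi-excellent it is Noetherian, hence so is $X$, which is moreover normal by hypothesis; thus Theorem~\ref{jw.thm.1} will apply once we verify its hypothesis, namely that every scheme $X'$ equipped with a projective birational morphism $\pi\colon X'\to X$ admits a projective log resolution.

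First I would check that such an $X'$ still lies within the scope of three-dimensional resolution of singularities. As $\pi$ is projective it is separated and of finite type, so $X'$ is separated and of finite type over $S$; in particular $X'$ is Noetherian, and, quasi-excellence being stable under morphisms of finite type, $X'$ is quasi-excellent. Since $\pi$ is birational and $X$ is integral, $X'$ is integral and $\dim X'=\dim X\le 3$. Then I would invoke the resolution of singularities of quasi-excellent schemes of dimension at most three, \cite{cos-pil-2014}: a reduced, separated, quasi-excellent scheme of dimension $\le 3$ admits a log resolution, obtained as a composition of blow-ups along regular centres over the singular locus, so that the resolution morphism is projective. Applied to $X'$, this yields the projective log resolution $\res(X')\to X'$ required by Theorem~\ref{jw.thm.1}.

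With the hypothesis established, Theorem~\ref{jw.thm.1} produces the projective log resolution $g\colon\res(X)\to X$ for which $\ex(g)$ supports a $g$-ample divisor, and Remark~\ref{rem:algebraic-spaces-and-stacks} extends this to the algebraic-space case, completing the proof. The only step that carries real content — everything else being a routine inheritance check and an appeal to Theorem~\ref{jw.thm.1} — is confirming that \cite{cos-pil-2014} indeed furnishes a resolution that is simultaneously projective \emph{and} has simple normal crossing exceptional locus, under precisely the quasi-excellence hypotheses in force here, rather than merely a proper, possibly non-projective, regular model; once that is granted, the corollary follows at once.
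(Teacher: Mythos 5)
Your overall architecture is the same as the paper's: check the hypothesis of Theorem~\ref{jw.thm.1} for every scheme $X'$ projective and birational over $X$, apply the theorem, and pass to algebraic spaces via Remark~\ref{rem:algebraic-spaces-and-stacks}. The gap sits exactly in the step you yourself single out as the only one with content, and you resolve it with an inaccurate claim. The theorem of \cite{cos-pil-2014} produces a \emph{proper} birational morphism from a regular scheme, obtained by patching local uniformizations; it is not stated or proved there as a global composition of blow-ups along regular centres, so your justification of projectivity (and of the simple normal crossing exceptional locus) is not what that reference supplies. This is precisely why the paper does not cite \cite{cos-pil-2014} alone but invokes \cite[Theorem 2.5 and 2.7]{many-p}, where projective log resolutions are extracted from Cossart--Piltant in the relevant (quasi-projective over affine quasi-excellent base) setting. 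It is also where the hypothesis of Corollary~\ref{cor.1} that $X$ be separated and of finite type over an \emph{affine} quasi-excellent $S$ enters; your argument never uses that hypothesis, which is a warning sign: as written it would prove projective log resolution for arbitrary quasi-excellent threefolds, a statement stronger than what is currently available.

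The algebraic-space case is likewise more than a one-line appeal to Remark~\ref{rem:algebraic-spaces-and-stacks}: that remark carries a hypothesis to verify, namely that representable projective (log) resolutions exist for every algebraic space $X'$ admitting a representable projective birational morphism to $X$. The paper verifies it via Chow's lemma \cite[Tag 088U]{stacks-project}, producing a projective birational $Y\to X'$ with $Y$ a scheme quasi-projective over $S$, and then uses Temkin's extension of \cite{cos-pil-2014}, which gives projective resolutions for such $Y$ (to appear in the revised version of \cite{many-p}); only then does the remark yield $\res(X)$ as a scheme. So: same route as the paper, but the two inputs that carry the actual content --- projectivity and snc-ness of the three-dimensional resolution, and the Chow's-lemma-plus-Temkin step for algebraic spaces --- are asserted rather than supplied.
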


\begin{cor}\label{cor.2}
Let $X$ be a Noetherian, normal, integral scheme (or algebraic space), that is separated and of finite type over an excellent scheme $S$ with $\dim S \leq 2$. Then $X$ admits a  regular, projective, Galois alteration
$g:\alt(X)\to X$  by a scheme $\alt(X)$, such that $\ex(g)$ supports a $g$-ample divisor.
\end{cor}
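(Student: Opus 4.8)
The plan is to deduce the statement from Theorem~\ref{jw.thm.2}, for which it suffices to verify that theorem's hypothesis: every scheme $X'\to X$ that is projective and generically purely inseparable over $X$ admits a regular, projective, Galois alteration. So fix such an $X'$; we may assume it is integral (this is automatic whenever such an $X'$ arises in Theorem~\ref{jw.thm.2}, and in any case one may replace $X'$ by the closure, with reduced structure, of its generic point lying over that of $X$). Since $X'$ is projective over $X$ and $X\to S$ is separated of finite type with $S$ excellent, $X'$ is again Noetherian, separated, integral and of finite type over $S$; note $\dim X'=\dim X$, but no bound on this number is needed, only $\dim S\le 2$.

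First I would invoke de Jong's theorem on alterations \cite{deJ-alt}, in the form that applies to an integral scheme of finite type over an excellent base of dimension at most two: it yields a regular, projective, Galois alteration $X''\to X'$ with $X''$ a scheme. This is precisely the input demanded by Theorem~\ref{jw.thm.2}, so that theorem produces a regular, projective, Galois alteration $g\colon\alt(X)\to X$ with $\ex(g)$ supporting a $g$-ample divisor. The case of algebraic spaces follows from Remark~\ref{rem:algebraic-spaces-and-stacks}, together with the observation that the alteration supplied by \cite{deJ-alt} may be taken to be a scheme, so the hypothesis of the algebraic-space version of Theorem~\ref{jw.thm.2} still holds.

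Essentially nothing is left to prove: the real content is in Theorem~\ref{jw.thm.2}, and the remaining reductions (finite type and separatedness of $X'$, the passage to an integral $X'$ and to algebraic spaces) are routine. The one point that requires care is quoting the alteration theorem in exactly the generality needed — for an arbitrary integral scheme of finite type over an excellent scheme $S$ with $\dim S\le 2$, with no extra hypotheses on the structure morphism to $S$, and with the conclusions that the total space is regular, the alteration is Galois, and the morphism to $X'$ is projective.
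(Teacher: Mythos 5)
Your overall strategy coincides with the paper's: the corollary reduces to verifying the hypothesis of Theorem~\ref{jw.thm.2} for the schemes $X'$ projective and generically purely inseparable over $X$ (which are again separated, of finite type over $S$), and then invoking Remark~\ref{rem:algebraic-spaces-and-stacks} for algebraic spaces. However, the one step you yourself single out as requiring care --- quoting the alteration theorem in exactly the generality needed --- is where your write-up has a gap. The reference \cite{deJ-alt} does not supply regular, \emph{projective}, \emph{Galois} alterations (in the sense used here: $Y/G\to X'$ generically purely inseparable) for an arbitrary integral scheme separated and of finite type over an excellent base of dimension $\leq 2$; its results concern varieties over fields, complete discrete valuation rings and low-dimensional bases, with equivariant statements only in restricted settings. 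The statement actually needed is the one the paper cites, namely de Jong's later paper \cite[Corollary 5.15]{MR1450427} together with \cite[4.3.1]{tem-dist}. Since for schemes the entire content of the corollary is this citation, attributing it to \cite{deJ-alt} leaves the load-bearing step unsupported, even though the needed statement is true.

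A second, smaller gap concerns the algebraic-space case. You only observe that the alteration of a scheme produced by the alteration theorem may be taken to be a scheme; but the hypothesis of the algebraic-space version of Theorem~\ref{jw.thm.2} in Remark~\ref{rem:algebraic-spaces-and-stacks} asks for regular, projective, Galois alterations of every \emph{algebraic space} $X'$ admitting a representable, projective, generically purely inseparable morphism to $X$, and the alteration theorems are stated for schemes. The paper bridges this exactly as in the proof of Corollary~\ref{cor.1}: by Chow's lemma \cite[Tag 088U]{stacks-project} there is a projective birational morphism $Y\to X'$ with $Y$ a scheme quasi-projective over $S$, one alters $Y$, and this also is what lets one take $\alt(X)$ to be a scheme. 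This reduction is routine, but it is a genuine step your proposal skips rather than a consequence of the observation you make.
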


\begin{rem} It is clear from the proof that one can find 
$g:\res(X)\to X$ and $g:\alt(X)\to X$ with other useful properties. For example, we can choose $\res(X)$ (resp.\ $\alt(X)$) to dominate any finite number of
resolutions  (resp.\ alterations). 

Also, if $Z_i\subset X$ are finitely many closed subschemes, and
embedded resolutions  (resp.\  regular, Galois alterations) exist over $X$,
then we can choose $\res(X)$ (resp.\ $\alt(X)$) to be an 
embedded resolution  (resp.\ regular, Galois  alteration)  for the $Z_i$. 

The log version of alterations does not seem to be treated in the literature.
\end{rem}

To fix our notation, recall that a normal scheme $X$ is $\q$-factorial if, for every  generically invertible sheaf $L$, there is an $m>0$ such that $L^{[m]}$
(the reflexive hull of $L^{\otimes m}$) is invertible.

We start with three lemmas; the first two are well known. 

\begin{lem}\label{jw.lem.0}
 Let $X$ be a  Noetherian, normal, $\q$-factorial  scheme,    $\pi:X'\to X$ a projective, birational morphism with $X'$ normal.  
Then there is a $\pi$-ample,  $\pi$-exceptional divisor  $E$ on $X'$.
\end{lem}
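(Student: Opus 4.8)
The plan is the standard correction trick from birational geometry: take any relatively ample divisor on $X'$ and subtract the pull‑back of its push‑forward, which becomes Cartier after clearing a denominator thanks to $\q$‑factoriality. Concretely, since $\pi$ is projective there is a $\pi$‑ample line bundle on $X'$, and as $X'$ is integral it has the form $\mathcal{O}_{X'}(H)$ for a $\pi$‑ample Cartier divisor $H$. Let $D:=\pi_*H$, a Weil divisor on the normal scheme $X$, and use $\q$‑factoriality of $X$ to pick $m>0$ with $mD$ Cartier. Then I would set
$$E:=mH-\pi^*(mD),$$
a Cartier divisor on $X'$, and claim it has the required two properties.

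First I would verify that $E$ is $\pi$‑exceptional, i.e.\ that $\ord_Z(E)=0$ for every prime divisor $Z\subset X'$ with $Z\not\subset\ex(\pi)$. Such a $Z$ is quasi‑finite over $X$ at its generic point, so $Z':=\overline{\pi(Z)}$ is a prime divisor on $X$. Since $X$ and $X'$ are normal, $\mathcal{O}_{X,Z'}$ and $\mathcal{O}_{X',Z}$ are discrete valuation rings, and pull‑back of functions gives an inclusion $\mathcal{O}_{X,Z'}\subseteq\mathcal{O}_{X',Z}$ of such rings inside the common fraction field $K(X)=K(X')$, hence an equality. Therefore $\ord_Z$ restricts to $\ord_{Z'}$ on $K(X)^\times$, the divisor $Z$ is the unique prime divisor of $X'$ lying over $Z'$ (being the center on the separated scheme $X'$ of the valuation $\ord_{Z'}$), and consequently $\coeff_{Z'}(D)=\coeff_Z(H)$. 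Hence $\ord_Z\bigl(\pi^*(mD)\bigr)=\ord_{Z'}(mD)=m\,\coeff_Z(H)=\ord_Z(mH)$, so $\ord_Z(E)=0$, as wanted.

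It remains to see that $E$ is $\pi$‑ample. Writing $\mathcal{O}_{X'}(E)\cong\mathcal{O}_{X'}(H)^{\otimes m}\otimes\pi^*\mathcal{O}_X(-mD)$, this is the $\pi$‑ample line bundle $\mathcal{O}_{X'}(H)^{\otimes m}$ twisted by the pull‑back of a line bundle from the base; such twists preserve $\pi$‑ampleness, e.g.\ because relative ampleness is local on the base, or directly from the projection formula applied to the cohomological criterion for relative ampleness. This finishes the argument.

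There is no serious obstacle here --- this is a routine lemma --- but the one point that must be handled carefully is the $\pi$‑exceptionality step: one has to rule out that a prime divisor dominating $X$ survives in $E$, and this rests entirely on the identity $\mathcal{O}_{X',Z}=\mathcal{O}_{X,Z'}$, hence on the normality of $X$ and $X'$ and properness of $\pi$; everything else is formal. As a consistency check, $E$ is automatically nonzero whenever $\pi$ is not an isomorphism --- otherwise $\mathcal{O}_{X'}(mH)$ would be a pull‑back from $X$ and yet $\pi$‑ample, forcing $\pi$ to be finite and thus an isomorphism --- so $\ex(\pi)$ must then contain a divisorial component, exactly as one expects for a $\q$‑factorial $X$.
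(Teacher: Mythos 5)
Your argument is correct and is essentially the paper's own proof: take a $\pi$-ample divisor (line bundle) $H$, use $\q$-factoriality to make a multiple of its push-forward Cartier (the paper phrases this as $(\pi_*H)^{[m]}$ being invertible), and twist $mH$ by the pull-back of $-m\pi_*H$ to get a $\pi$-ample divisor that is trivial off $\ex(\pi)$, hence $\pi$-exceptional. The only difference is that you spell out the coefficient computation and the ampleness-under-twist step, which the paper leaves implicit.
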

\begin{proof} Let   $H$  be  a $\pi$-ample  line bundle on $X'$. 
Choose $m>0$ such that $(\pi_*H)^{[m]}$ is invertible.
Then $H^m\otimes \pi^*\bigl((\pi_*H)^{[-m]}\bigr)$ is $\pi$-ample
and trivial on $X'\setminus \ex(\pi)$.  Thus it is linearly equivalent to a 
$\pi$-exceptional divisor  $E$.
\end{proof}

\begin{lem}\label{jw.lem.1}
 Let $X$ be a  Noetherian, normal  scheme,    $\pi_1:X_1\to X$ a projective, generically purely inseparable morphism, and 
  $H_1$   a  line bundle on $X_1$. 
Set $U_1:=X_1\setminus \ex(\pi_1)$. 

Then there is a coherent, generically invertible sheaf $L_1$ on $X$  and $q>0$, such that, 
$\pi_1^* L_1|_{U_1}\cong H_1^q|_{U_1}$.
\end{lem}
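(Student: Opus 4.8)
The plan is to split $\pi_1$, via its Stein factorization, into a fibre--contracting birational part --- over which $H_1$ is descended simply by transport along an isomorphism --- and a finite, generically inseparable part --- over which one descends by a Frobenius--power trick --- and then to check that the exceptional locus does not interfere with the second part.

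First I would form the Stein factorization $\pi_1=h\circ g$, with $g\colon X_1\to X'$ proper and $g_*\o_{X_1}=\o_{X'}$, and $h\colon X'\to X$ finite; the scheme $X'=\spec_X\pi_{1*}\o_{X_1}$ is finite over $X$ because $\pi_1$ is proper. Since $h$ is finite, $\ex(\pi_1)=\ex(g)$; and since the fibres of $g$ are connected and proper, $\ex(g)$ is a union of fibres of $g$, so $g^{-1}(V')=U_1$ where $V':=X'\setminus g(\ex(g))$ is open, and $g$ restricts to an \emph{isomorphism} $g\colon U_1\xrightarrow{\sim}V'$ (a finite morphism whose pushforward of $\o$ is $\o$ is an isomorphism). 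Separately, as $\pi_1$ is generically purely inseparable, $K(X_1)/K(X)$ is a finite purely inseparable extension; let $q=1$ if this is trivial, and otherwise let $q=p^e$ be a power of the characteristic with $K(X_1)^q\subseteq K(X)$. Every section $c$ of $\o_{X'}$ is integral over $\o_X$ and satisfies $c^q\in K(X)$, hence $c^q\in\o_X$ by normality; thus $c\mapsto c^q$ defines a morphism $\psi\colon X\to X'$ with $\psi\circ h=F^e_{X'}$ and $h\circ\psi=F^e_X$, the $e$-fold absolute Frobenii.

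Now let $M$ be the line bundle on $V'$ corresponding to $H_1|_{U_1}$ under $g|_{U_1}$, extend $M$ to some coherent sheaf $L_1'$ on $X'$ (possible since $X'$ is Noetherian), and set $L_1:=\psi^*L_1'$; this sheaf is coherent, and generically invertible since $M$ and $\psi$ are. Over $V'$ one computes $h^*L_1|_{V'}=\bigl((\psi\circ h)^*L_1'\bigr)|_{V'}=(F^e_{V'})^*M\cong M^{\otimes q}$, and pulling this back along the isomorphism $g\colon U_1\xrightarrow{\sim}V'$ yields $\pi_1^*L_1|_{U_1}=(g|_{U_1})^*\bigl(h^*L_1|_{V'}\bigr)\cong (H_1|_{U_1})^{\otimes q}=H_1^q|_{U_1}$, as desired.

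I expect the main obstacle to be the assertion that $\ex(\pi_1)$ is saturated, that is, $\ex(\pi_1)=\pi_1^{-1}\bigl(\pi_1(\ex(\pi_1))\bigr)$, equivalently that $g$ is an isomorphism over all of $V'$; this is precisely what guarantees one never has to pull $L_1$ back over the locus where $\pi_1$ has positive--dimensional fibres. It follows from connectedness of the fibres of the Stein factorization together with the elementary fact that a connected Noetherian scheme possessing a positive--dimensional irreducible component has no isolated points (an isolated point would be a connected component). A secondary technical point is that the generic purely--inseparable hypothesis, combined with normality of $X$, really does force the finite part $h$ to be radicial, so that the Frobenius section $\psi$ is a genuine morphism rather than merely a rational map.
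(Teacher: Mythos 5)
Your proof is correct and follows essentially the same route as the paper: Stein factorization, the observation that $X_1\to X'$ is an isomorphism over the complement of the image of the exceptional locus, descent of the line bundle along a power of Frobenius (using that the finite part is purely inseparable, which is where normality of $X$ enters), and extension to a coherent sheaf. The only cosmetic difference is that you extend on $X'$ and then pull back along a globally constructed Frobenius-factorization morphism $\psi\colon X\to X'$, whereas the paper descends to a line bundle $L_U$ on the open image $U\subset X$ via the Frobenius factorization of $U'\to U$ and then extends $L_U$ coherently over $X$.
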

\begin{proof}
Consider the  Stein factorization
$X_1\xrightarrow{\rho'} X' \xrightarrow{\rho} X$ of $\pi$.   
The images of $U_1$ give  $U'\subset X'$ and $U\subset X$. So
$\rho'_*H_1$ is a line bundle on $U'$. Since $U'\to U$ is finite and purely inseparable,  it factors through a power of Frobenius; cf.\ \cite[Tag 0CNF]{stacks-project}. Hence there is a line bundle $L_U$ on $U$ such that
$\rho^*L_U\cong \rho'_*H_1^q|_{U'}$, where we can take $q=\deg \rho$.
We can then extend $L_U$ to a coherent sheaf $L_1$ on $X$. 
\end{proof}

\begin{lem}\label{jw.lem.2}
 Let $X$ be a  Noetherian, normal  scheme and    $\pi_1:X_1\to X$ a projective, generically purely inseparable morphism.
  Assume that $X_1$ is $\q$-factorial and let 
  $H_1$  be a $\pi_1$-ample line bundle on $X_1$.
Let $L_1$ be a coherent, generically invertible sheaf  on $X$ as in Lemma \ref{jw.lem.1}.
Set   $L_2:=\shom_X(L_1, \o_X)$ and 
$$
\pi_2: X_2:=\proj_X \tsum_{m\geq 0}L_2^{\otimes m}\to X.
$$
Let  $\pi_3:X_3\to X$ be a projective, generically purely inseparable morphism
that dominates both $X_1$ and $X_2$.  
Then there is a $\pi_3$-ample,  $\pi_3$-exceptional divisor  $E$ on $X_3$.
\end{lem}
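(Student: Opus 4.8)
The plan is to exhibit the divisor as the sum of a ``large'' $\pi_3$-exceptional line bundle $M$, pulled back from the fibre product $X_1\times_X X_2$ and carrying all the positivity coming from $X$, and a ``small'' $\pi_3$-exceptional divisor $D$ on $X_3$ supplying positivity in the directions that $M$ misses; $D$ will come from the $\q$-factoriality of $X_1$ by the argument in the proofs of Lemmas \ref{jw.lem.0}--\ref{jw.lem.1}. I start with notation and bookkeeping. Replacing $L_1$ by its reflexive hull we may assume $L_1$, hence $L_2=L_1^\vee$, is reflexive of rank one; let $U\subseteq X$ be the open locus where $L_1$ is invertible, which is big because $X$ is normal. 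On $X_2$ the bundle $\o_{X_2}(1)$ is $\pi_2$-ample, $\pi_2$ is an isomorphism over $U$, $\o_{X_2}(1)|_{\pi_2^{-1}(U)}\cong L_2|_U$, and — using that $\pi_2$ is birational, Zariski's main theorem, and reflexivity of $L_2$ — one has $X_2\setminus\ex(\pi_2)=\pi_2^{-1}(U)$. Write $p\colon X_3\to X_1$ and $r\colon X_3\to X_2$ for the morphisms over $X$, let $(p,r)\colon X_3\to Y:=X_1\times_X X_2$, and set $V:=X_3\setminus\ex(\pi_3)$. A fibre-dimension argument (using that $X_1$, $X_2$ are normal and $p$, $r$ generically purely inseparable) shows that $p$, $r$, and $(p,r)$ contract nothing outside $\ex(\pi_3)$; hence $\ex(p)$, $r^{-1}(\ex(\pi_2))$ and $\ex((p,r))$ lie in $\ex(\pi_3)$, and in particular $p(V)\subseteq X_1\setminus\ex(\pi_1)$ and $r(V)\subseteq\pi_2^{-1}(U)$.

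For the large bundle, put $\mathcal A_Y:=\mathrm{pr}_1^*H_1^{\otimes q}\otimes\mathrm{pr}_2^*\o_{X_2}(1)$ on $Y$; its restriction to a fibre $X_{1,x}\times_{\kappa(x)}X_{2,x}$ is an external product of ample line bundles, hence ample by the Segre embedding, so $\mathcal A_Y$ is ample over $X$. Set $M:=(p,r)^*\mathcal A_Y=p^*H_1^{\otimes q}\otimes r^*\o_{X_2}(1)$. The crucial claim is $M|_V\cong\o_V$: over $V$, Lemma \ref{jw.lem.1} gives $p^*H_1^{\otimes q}\cong p^*\pi_1^*L_1=\pi_3^*L_1$ (since $p(V)\subseteq X_1\setminus\ex(\pi_1)$) and $r^*\o_{X_2}(1)\cong\pi_3^*L_2$ (since $r(V)\subseteq\pi_2^{-1}(U)$), so $M|_V\cong\pi_3^*(L_1\otimes L_1^\vee)|_V$. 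The evaluation $L_1\otimes L_1^\vee\to\o_X$ is an isomorphism over $U$; pulling it back and restricting to $V$ yields a homomorphism of line bundles $M|_V\to\o_V$ that is an isomorphism over $\pi_3^{-1}(U)$. Since $\pi_3|_V$ is quasi-finite and $X\setminus U$ has codimension $\ge 2$, the open set $\pi_3^{-1}(U)$ is big in $V$, so a homomorphism of line bundles on the integral scheme $V$ that is an isomorphism in codimension one is an isomorphism; hence $M$ is trivial on $V$.

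For the small divisor, I use the $\q$-factoriality of $X_1$: applying the construction of Lemmas \ref{jw.lem.0}--\ref{jw.lem.1} to the projective, generically purely inseparable morphism $p\colon X_3\to X_1$ and a $p$-ample line bundle $H_3$, one obtains a coherent, generically invertible sheaf $L_p$ on $X_1$ and $q'>0$ with $p^*L_p\cong H_3^{\otimes q'}$ over $X_3\setminus\ex(p)$; as $X_1$ is $\q$-factorial, $L_p^{[m]}$ is invertible for some $m>0$, and then $\o_{X_3}(D):=H_3^{\otimes mq'}\otimes p^*(L_p^{[m]})^\vee$ is a line bundle that is $p$-ample and — by the same codimension-one argument — trivial on $X_3\setminus\ex(p)$. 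Thus $D$ is a Cartier divisor with $\supp D\subseteq\ex(p)\subseteq\ex(\pi_3)$, and since the fibres of $(p,r)$ lie inside those of $p$, $\o_{X_3}(D)$ is even $(p,r)$-ample. Now $\o_{X_3}(D)$ is $(p,r)$-ample and $M=(p,r)^*\mathcal A_Y$ with $\mathcal A_Y$ ample over $X$, so $\o_{X_3}(D)\otimes M^{\otimes n}$ is $\pi_3$-ample for $n\gg 0$ by the relative ampleness criterion; it is trivial on $V$ because both factors are, hence equals $\o_{X_3}(E)$ for a Cartier divisor $E$ with $\supp E\subseteq X_3\setminus V=\ex(\pi_3)$ — the desired $\pi_3$-ample, $\pi_3$-exceptional divisor.

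I expect the main obstacle to be the identity $M|_V\cong\o_V$ of the second paragraph: it rests on the bookkeeping that $p$, $r$, $(p,r)$ contract nothing outside $\ex(\pi_3)$ and on the identification $X_2\setminus\ex(\pi_2)=\pi_2^{-1}(U)$, together with the codimension-one comparison of line bundles, none of which is purely formal. A minor point to watch is that the construction of $D$ reuses Lemma \ref{jw.lem.1}, whose proof is cleanest when the source is normal; this is harmless in the intended applications, where $X_3$ is a resolution or a regular alteration, hence normal.
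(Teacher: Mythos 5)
Your proof is correct and is essentially the argument of the paper: you build the same bundle $p^*H_1^{qm}\otimes r^*\o_{X_2}(m)$ twisted by a small exceptional divisor coming from the $\q$-factoriality of $X_1$, observe it is $\pi_3$-ample and trivial off $\ex(\pi_3)$, and take the divisor of a rational section. The only differences are packaging — the paper produces the small divisor by applying Lemma~\ref{jw.lem.0} to the Stein factorization of $X_3\to X_1$ and then uses ``ample plus nef'' on $X_3$, while you rerun the Lemma~\ref{jw.lem.1}/\ref{jw.lem.0} construction relative to $X_1$ and route the positivity through $X_1\times_X X_2$; also note that the identification $X_2\setminus\ex(\pi_2)=\pi_2^{-1}(U)$ you worry about is not actually needed, since $r(V)\subseteq\pi_2^{-1}(U)$ already follows from $\pi_3(V)=\pi_1(p(V))\subseteq\pi_1(X_1\setminus\ex(\pi_1))\subseteq U$.
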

\begin{proof}
Let $\tau_i:X_3\to X_i$ be the natural maps,  $H_2:=\o_{X_2}(1)$, and  
$X_3\xrightarrow{\tau'} X'_1 \xrightarrow{\tau} X_1$  the Stein factorization of 
$\tau_1$. Since $X_1$ is $\q$-factorial and $\tau$ is finite and purely inseparable (and so, as above, it is an isomorphism or it factors through a power of Frobenius), $X'_1$ is also $\q$-factorial. 

By Lemma~\ref{jw.lem.0}  there is a
$\tau'$-ample,  $\tau'$-exceptional divisor  $E_3$ on $X_3$. 
Then  $\tau_1^*H_1^m(E_3)$ is   $\pi_3$-ample for  $m\gg 0$. 

Since  $H_2$ is $\pi_2$-nef, its pull-back $\tau_2^*H_2$ is  $\pi_3$-nef.
Therefore 
$\tau_2^*H_2^m\otimes \tau_1^*H_1^{qm}(E_3)$ is   $\pi_3$-ample as well, where $q$ is as in Lemma~\ref{jw.lem.1}. 

Set  $U_3:=X_3\setminus \ex(\pi_3)$; its images  give open subschemes 
 $U\subset X$ and $U_i\subset   X_i$.  
Then
$$
\tau_2^*H_2^m\otimes \tau_1^*H_1^{qm}(E_3)|_{U_3}\cong 
\pi_3^*\bigl(L_2^m|_U\otimes L_1^m|_U\bigr)\cong \o_{U_3}.
$$
This gives a rational section of $\tau_2^*H_2^m\otimes \tau_1^*H_1^{qm}(E_3)$
whose divisor is $\pi_3$-ample and  $\pi_3$-exceptional.
\end{proof}

\begin{say}[Proof of Theorem~\ref{jw.thm.1}] 
Start with a projective (log) resolution  $\pi_1:X_1\to X$ and construct
$\pi_2:X_2\to X$ as in Lemma  \ref{jw.lem.2}. Let  $X_{12}\subset X_1\times_XX_2$ 
be the irreducible component that dominates $X$, and  $X_3\to X_{12}$  a projective (log) resolution.
By Lemma \ref{jw.lem.2}, $\pi_3:X_3\to X$  has a $\pi_3$-ample, $\pi_3$-axceptional   divisor.
\qed
\end{say}

\begin{say}[Proof of Theorem~\ref{jw.thm.2}] 
Start with a regular, projective, Galois  alteration  $\bar\pi_1:\bar X_1\to X$.
Let $\pi_1:X_1\to X$ be its quotient by the Galois group of $k(\bar X_1/X)$.
Note that $X_1$ is $\q$-factorial.

Construct
$\pi_2:X_2\to X$ as in Lemma \ref{jw.lem.2}. Let  $X_{12}\subset X_1\times_XX_2$ 
be the irreducible component that dominates  $X$, and $\bar X_3\to X_{12}$ a regular , projective, Galois alteration.  Let
$X_3\to X_{12}$ be its quotient by the Galois group of $k(\bar X_3/X_{12})$.
By Lemma \ref{jw.lem.2}, $\pi_3:X_3\to X$  has a $\pi_3$-ample, $\pi_3$-axceptional   divisor.
Its pull-back to $\bar X_3$ is  a $\bar \pi_3$-ample, $\bar \pi_3$-exceptional   divisor, where $\bar \pi_3 \colon \bar X_3 \to X$ is the natural morphism.
\qed
\end{say}
\begin{rem} \label{rem:algebraic-spaces-and-stacks}
Theorems \ref{jw.thm.1}--\ref{jw.thm.2}   are valid for every integral, Noetherian algebraic space (resp.\ stack) $X$ with $\res(X)$ or $\alt(X)$ being an algebraic space (resp.\  stack), assuming the appropriate representable resolutions or  regular alterations by algebraic spaces (resp.\  stacks) exist for every algebraic space (resp.\  stack) $X'$ admitting a representable projective birational  (resp.\ generically purely inseparable) morphism to $X$. As for algebraic spaces, we note that all of the above constructions can be performed in the category of algebraic spaces and their validity may be verified \'etale locally. As for algebraic stacks, we note that every algebraic stack admits a presentation as a quotient of an algebraic space by a smooth groupoid \cite[Tag 04T3]{stacks-project}, and that quotients of algebraic spaces by smooth groupoids always exist \cite[Tag 04TK]{stacks-project}. We can then conclude as each step in our constructions is equivariant with respect to a chosen presentation.

If $X$ is an algebraic space and the appropriate resolutions or  regular alterations of all algebraic spaces admiting representable, projective, birational or generically purely inseparable morphisms to $X$ exist as schemes, then we can assume that $\res(X)$ or $\alt(X)$ is a scheme. 

Here, a representable morphism of quasi-compact quasi-separated algebraic spaces (resp.\ algebraic stacks) is \emph{projective} if it is proper and there exists a relatively ample invertible sheaf (cf.\ \cite[Definition 8.5 and Theorem 8.6]{Rydh-stacks15}).
\end{rem}

\begin{say}[Proof of Corollary~\ref{cor.1}]
When $X$ is a scheme, the assumptions of Theorem \ref{jw.thm.1} are valid for integral affine quasi-excellent schemes of dimension at most three  by \cite{cos-pil-2014}, see \cite[Theorem 2.5 and 2.7]{many-p}. 

If $X$ is an algebraic space, then by Chow's lemma \cite[Tag 088U]{stacks-project} we can find a projective birational morphism $h \colon Y \to X$ such that the scheme $Y$ is quasi-projective over $S$. 
 M.\ Temkin extended \cite{cos-pil-2014} to give a projective resolution for such a scheme $Y$; the proof  will be contained in the revised version  of \cite{many-p}.

Similarly, we obtain projective resolutions of all algebraic spaces admitting a projective birational morphism to $X$. By Remark~\ref{rem:algebraic-spaces-and-stacks} we can  obtain $\res(X)$ as a scheme.
\end{say}

\begin{say}[Proof of Corollary~\ref{cor.2}]
When $X$ is a scheme, the assumptions of Theorem \ref{jw.thm.2} are valid for all integral schemes that are separated and of finite type over an excellent scheme $S$ with $\dim S \leq 2$ (see \cite[Corollary 5.15]{MR1450427} and
\cite[4.3.1]{tem-dist}).

If $X$ is an algebraic space, then a regular, projective,  Galois alteration of $X$ (and of all algebraic spaces admitting a projective generically purely inseparable morphism to $X$) exists by Chow's lemma as in the proof of Corollary~\ref{cor.1}, and so we can conclude by Remark \ref{rem:algebraic-spaces-and-stacks} to get $\alt(X)$,  which is a scheme.
\end{say}

\begin{rem}
The above proofs of  Corollaries~\ref{cor.1}--\ref{cor.2} do not immediately apply to  algebraic stacks. Indeed, Chow's lemma for algebraic stacks only ensures the existence of a proper surjective cover by a quasi-projective scheme. This cover need not be birational. On the other hand, one could try to construct a resolution equivariantly with respect to a presentation, but we do not know whether the algorithms for the existence of resolutions and regular alterations from \cite{cos-pil-2014} and \cite{deJ-alt} can be run equivariantly (in contrast to the characteristic zero case). For Deligne-Mumford stacks of finite type over a Noetherian scheme, the proper surjective cover from Chow's lemma may be assumed to be generically \'etale \cite[Corollaire 16.6.1]{L-MB2000}. In particular, they admit regular  alterations (and so also regular, Galois  alterations) and Corollary~\ref{cor.2} holds for them.   
\end{rem}

\begin{ack} We thank M.~Temkin for very helpful e-mails on alterations and B.~Bhatt, L.~Ma, Z.~Patakfalvi, K.~Schwede, K.~Tucker, and J.~Waldron for valuable conversations.
Partial  financial support  to JK   was provided  by  the NSF under grant number
DMS-1901855.
\end{ack}


\newcommand{\etalchar}[1]{$^{#1}$}
\def\cprime{$'$} \def\cprime{$'$} \def\cprime{$'$} \def\cprime{$'$}
  \def\cprime{$'$} \def\dbar{\leavevmode\hbox to 0pt{\hskip.2ex
  \accent"16\hss}d} \def\cprime{$'$} \def\cprime{$'$}
  \def\polhk#1{\setbox0=\hbox{#1}{\ooalign{\hidewidth
  \lower1.5ex\hbox{`}\hidewidth\crcr\unhbox0}}} \def\cprime{$'$}
  \def\cprime{$'$} \def\cprime{$'$} \def\cprime{$'$}
  \def\polhk#1{\setbox0=\hbox{#1}{\ooalign{\hidewidth
  \lower1.5ex\hbox{`}\hidewidth\crcr\unhbox0}}} \def\cdprime{$''$}
  \def\cprime{$'$} \def\cprime{$'$} \def\cprime{$'$} \def\cprime{$'$}
\providecommand{\bysame}{\leavevmode\hbox to3em{\hrulefill}\thinspace}
\providecommand{\MR}{\relax\ifhmode\unskip\space\fi MR }
\providecommand{\MRhref}[2]{%
  \href{http://www.ams.org/mathscinet-getitem?mr=#1}{#2}
}
\providecommand{\href}[2]{#2}

\bigskip

   Princeton University, Princeton NJ 08544-1000, \

 \email{kollar@math.princeton.edu}

\bigskip 
 
 University of Michigan, Ann Arbor MI 48109 \

\email{jakubw@umich.edu}

\end{document}